\title{Generalized polynomial functors}
\author[J. D. Axtell]{Jonathan D. Axtell}
 \address{Sungkyunkwan University, Suwon 16419, Republic of Korea}
\email{jaxtell@skku.edu}
\begin{document}
\maketitle

\def \A {\mathcal A}
\def \B {\mathcal B}
\def \C {\mathcal C}
\def \D {\mathcal D}
\def \E {\mathcal E}
\def \G {\mathcal G}
\def \M {\mathcal M}
\def \P {\mathcal{P}}
\def \Q {\mathcal{Q}}

\def \EE {\boldsymbol{\E}}
\def \EG {\boldsymbol{\E\G}}
\def \GG {\boldsymbol{\G}}
\def \MM {\boldsymbol{\M}}
\def \PP {\boldsymbol{\P}}
\def \QQ {\boldsymbol{\Q}}
\def \VV {\boldsymbol{\V}}

\def \PPol {\boldsymbol{\P\hspace{-1pt}ol}}

\def \U {\mathcal U}
\def \V {\mathcal V}
\def \W {\mathcal W}
\def \Si{\mathfrak S}

\def \tensor{\otimes}

\def \k {\Bbbk}
\def \M {\mathcal{M}}
\def \Hom {\mathrm{Hom}}

\newcommand{\Pol}{\mathsf{Pol}}

\newcommand{\I}{\mathrm{I}}
\newcommand{\II}{\mathrm{II}}

\renewcommand{\hom}{\mathrm{hom}}
\newcommand{\eEnd}{\mathrm{end}}

\newcommand{\N}{\mathbb{N}}
\newcommand{\Z}{\mathbb{Z}}
\newcommand{\End}{\mathrm{End}}

\newcommand{\abs}[1]{|#1|}

\newcommand{\equi}{\stackrel{\sim}{\longrightarrow}}

\newcommand{\isoto}{\xrightarrow{\sim}}
\newcommand{\onto}{\twoheadrightarrow}

\def \mod {\mathbf{mod}}

\def \eg {\mathsf{eg}}
\def \ev {\mathsf{ev}}

\def \e {\mathsf{e}}

\def \mat {\mathit{M}}

\def \hm {\hspace{-0.5mm}}

\newcommand{\proj}{\mathbf{proj}}
\newcommand{\rrep}{\mathbf{rep}}
\newcommand{\rep}{\mathrm{rep}}

\mathchardef\hyphen="2D
\renewcommand{\-}{\kern 1pt \hyphen}

\newcommand{\0}{\bar{0}}
\newcommand{\1}{\bar{1}}

\newcommand{\bs}{\boldsymbol}

\newtheorem{theorem}{Theorem}[section]
\newtheorem{lemma}[theorem]{Lemma}
\newtheorem{proposition}[theorem]{Proposition}
\newtheorem{corollary}[theorem]{Corollary}
\theoremstyle{definition}
\newtheorem{definition}[theorem]{Definition}
\newtheorem{remark}[theorem]{Remark}
\newtheorem{example}[theorem]{Example}
\newtheorem{examples}[theorem]{Examples}

\begin{abstract}
We define Schur categories, $\Gamma^d \mathcal C$,
associated to a $\k$-linear category $\C$, 
over a commutative ring $\k$.
The corresponding representation categories, 
$\mathbf{rep}\, \Gamma^d\mathcal C$, 
generalize categories of strict polynomial functors.
Given a $\k$-superalgebra $A$, 
we show that 
for certain categories $\mathcal{V} = \VV_{\hm A}$, $\EE_{\hm A}$ of $A$-supermodules, 
 there is a Morita equivalence between 
$\mathbf{rep}\, \Gamma^d\mathcal{V}$
and the category of supermodules over generalized 
Schur superalgebras of the form 
$S^A(m|n,d)$ and $S^A(n,d)$, respectively. 
We also describe a formulation of generalized 
Schur-Weyl duality from the viewpoint of the category 
$\mathbf{rep}\, \Gamma^d \EE_{\hm A}$.
\end{abstract}

\section{Introduction}
Let $\k$ be a commutative ring and suppose $A=A_{\0}\oplus A_{\1}$ is a  
$\k$-superalgebra 
such that each $A_{\epsilon}$ ($\epsilon =\0,\1$) is a finitely generated, free $\k$-module.
The generalized Schur superalgebras $S^A(n,d)$ were
defined by Evseev and Kleshchev  \cite{EK1, EK2} 
in order to prove the Turner double conjecture.
These superalgebras are related to 
wreath product algebras by a
generalized Schur-Weyl duality established in \cite{EK1}. 

The category $\P_{d}$
of (degree $d$, homogeneous) strict polynomial functors
was defined by Friedlander and Suslin in \cite{FS} 
in order to prove results about the cohomology of finite group schemes.
In the case where $\k$ is a field of 
characteristic $p \neq 2$, 
certain super-analogues of these categories were defined in \cite{A1},
denoted 
$\mathsf{Pol}^{(\I)}_d$ and $\mathsf{Pol}^{(\II)}_d$,
which are isomorphic to 
the categories of finitely generated supermodules over
the Schur superalgebras,
$S(m|n,d)$ and $Q(n,d)$, respectively,
provided that $m,n\geq d$.

In this paper, we describe a generalization of the categories 
$\P_{d}$ and $\Pol_d^{(\dagger)}$ ($\dagger= \I, \II$).
Suppose first that $\C$ is a $\k$-linear category whose 
Hom sets belong to the category 
$\VV_\k$
of finitely generated, projective  $\k$-supermodules, 
and such that composition induces an even $\k$-linear map
(see Section \ref{ss-enriched} for more details).
In Section \ref{sec-divided}, we define 
the Schur categories, $\Gamma^d \C$, 
whose morphisms consist of divided powers of 
morphisms in the category $\C$.
Then in Section \ref{sec-gen-poly}, 
we define the category of generalized polynomial functors
$\PP_{d,\C}$ to be the representation category
\[\PP_{d,\C} := 
\rrep\, \Gamma^d\C \]
consisting of all even, $\k$-linear functors 
from $\Gamma^d\C$ to $\VV_\k$.
%

If the Hom-sets of  $\C$ 
belong to the subcategory
$\V_\k \subset  \VV_\k$
of ordinary  
(non-graded) projective $\k$-modules,
then the morphisms of $\Gamma^d\C$
also belong to $\V_\k$.  
In this case, we also define the category of generalized 
polynomial functors $\P_{d,\C}$ to be the representation category
 \[ \P_{d,\C} := \rep\, \Gamma^d\C,\]
consisting of all $\k$-linear functors from $\Gamma^d \C$ to $\V_\k$. 

%
Now suppose that $A\in \VV_\k$ is a $\k$-superalgebra as described above.
Let $\VV_A$ denote the category of all finitely-generated, projective right $A$-supermodules. 
We then introduce  
{\em evenly-generated} $A$-supermodules in Section \ref{ss-eg}, 
and we let
$\EE_A \subset \VV_A$
denote the full subcategory 
of evenly-generated, projective right 
$A$-supermodules.  

Suppose that $m,n\geq d$ are integers.
Then we also describe a generalized Schur superalgebra of the form
$S^A(m|n,d)$ in Section \ref{ss-gen-schur}.
In our main result Theorem \ref{thm-main-1}, we show that 
there are equivalences 
of categories
\[\PP_{d,\VV_A}\ \simeq\ S^A(m|n,d) \- \mathbf{mod},\qquad
\PP_{d,\EE_A}\ \simeq\ S^A(n,d) \- \mathbf{mod}, \]
between a category of generalized polynomial functors and a 
corresponding category of left supermodules over a
Schur superalgebra.

Considering the special cases where $A= \k$ or $A=\C(1)$, a 
rank one Clifford algebra, gives the following equivalences of categories: 
\[\PP_{d,\VV_\k} \simeq \Pol^{(\I)}_d, \qquad
\PP_{d,\EE_{\C(1)}} \simeq \Pol^{(\II)}_d\]
respectively. 
Similarly, we have isomorphisms of superalgebras:
\[S^{C(1)}(n,d) \cong Q(n,d), \qquad
S^\k(m|n,d) \cong S(m|n,d).\]
It follows that Theorem \ref{thm-main-1} generalizes
Theorem 4.2 of \cite{A1}.

Now consider the case of an ordinary ungraded algebra $A$.
Then we may regard $S^A(n,d)$ as an ordinary algebra.
In Theorem \ref{thm-main-2},
we describe an equivalence 
\[\P_{d,\V_A}\ \simeq\ S^A(n,d) \- \mathrm{mod}\]
between $\P_{d,\V_A}$
and the category of left 
$S^A(n,d)$-modules.
In the case $A=\k$, we have an equivalence of categories
\[\P_{d,\V_\k} \simeq \P_{d,\k},\]
and there is algebra isomorphism  between
$S^\k(n,d)$ and the classical Schur algebra $S(n,d)$ defined in \cite{Green}.
It follows that Theorem \ref{thm-main-2} generalizes
Theorem 3.2 of \cite{FS}.

Finally, in Section \ref{sec-gen-sw}, 
we describe an exact functor from the category 
$\PP_{d,\EE_A}$ 
to the category of
finite dimensional left supermodules over the wreath product superalgebra $A\wr\Si_d$
(Theorem \ref{thm-gen-sw}).
This functor may be viewed as a categorical analogue of the generalized Schur-Weyl duality  described by Kleshchev and Etseev in \cite{EK2}.

\section{Preliminaries}
We assume throughout that $\k$ is a commutative ring with unit. 
Let $\mathbb{N}$ and $\mathbb{N}_0$
denote the positive and nonnegative integers, respectively.

\subsection{Definitions}
Let $\mathcal{M}_\k$ denote the category of all $\k$-modules and $\k$-linear maps.
Given 
$M,N \in \mathcal{M}_\k$, we write $M\otimes N := M\otimes_\k N$,
 $\mathrm{Hom}(M,N) := \mathrm{Hom}_\k(M,N)$ 
 and
 $\End(M):= \mathrm{Hom}(M,M)$.

A $\k$-{\em supermodule} is a 
 $\mathbb{Z}/2$-graded $\k$-module, 
$M=M_{\0}\oplus M_{\1}$.
We write $|M| \in \mathcal{M}_\k$ to denote the ordinary $\k$-module
obtained by forgetting the $\mathbb{Z}/2$-grading.
Given 
 $\varepsilon \in \mathbb{Z}/2$ and homogeneous $v\in M_\varepsilon$, 
we also write $\overline{v}=\varepsilon$. 
Elements of $M_{\0}$ (resp.~$M_{\1}$) are called 
{\em even} (resp.~{\em odd}).  

Let us write $\MM_\k$ to denote the  category 
of all $\k$-supermodules and $\k$-linear maps. 
Given objects $M,N\in \MM_\k$, 
both the tensor product $M\otimes N$ and the hom-set 
$\mathrm{Hom}(M,N)$ 
are again objects of $\MM_\k$,
with $\mathbb{Z}/2$-gradings 
defined by 
\[(M\otimes N)_\epsilon := M_{\0}\otimes N_\epsilon \oplus M_{\1}\otimes N_{\1+\epsilon}\]
and 
\[\mathrm{Hom}(M,N)_\epsilon :=
\mathrm{Hom}(M_{\0},N_\epsilon) \oplus \mathrm{Hom}(M_{\1}, N_{\1+\epsilon}),\]
respectively, for each $\epsilon \in \mathbb{Z}/2$.

We will consider $\mathcal{M}_\k$ as a subcategory of $\boldsymbol{\mathcal{M}}_\k$
by identifying an ordinary $\k$-module $M\in \mathcal{M}_\k$ as a $\k$-supermodule concentrated in degree $\0$.

\subsection{Superalgebras}
A {\em superalgebra} 
is a $\k$-supermodule $A= A_{\0}\oplus A_{\1}$ which is a unital associative $\k$-algebra, 
such that multiplication
 $m_A : A\otimes A \to A$
 is an even $\k$-linear map.
A superalgebra homomorphism $\vartheta: A\to B$ is an even $\k$-linear map that 
is an algebra homomorphism in the usual sense.

Given a pair of superalgebras 
$A,B$, the tensor product $A\otimes B$ 
is again a superalgebra in a natural way. 
The multiplication is defined by 
the usual {\em rule of signs} convention 
\begin{equation}
\label{eq-rule}
(a_1\otimes b_1)(a_2 \otimes b_2) = (-1)^{\overline{b_1}\, \overline{a_2}} (a_1 a_2) \otimes (b_1 b_2)
\end{equation}
for all
$a_1,a_2 \in A$, $b_1,b_2 \in B$,
and the unit is given by $1_{A\otimes B} := 1_A \otimes 1_B$.
Expressions of the form \eqref{eq-rule} are assumed to hold for homogeneous elements
and are then extended linearly to the general case.

\subsection{Supermodules}
Let $A$ be a superalgebra.  
A {\em left $A$-supermodule} is
a $\k$-supermodule $M\in \boldsymbol{\mathcal{M}}_\k$ 
which is a left $A$-module in the usual sense, such that 
$A_\epsilon M_{\epsilon'} \subseteq M_{\epsilon+\epsilon'}$ for $\epsilon,\epsilon' \in \mathbb{Z}/2$.
One may define {\em right $A$-supermodules} similarly. 
A {\em subsupermodule}
of a left (resp.~right) $A$-supermodule 
$M$ is an $A$-submodule $N \subset M$ such that
$N = (N\cap M_{\0})\oplus 
(N\cap M_{\1})$.

A {\em homomorphism} $\varphi: M \rightarrow N$ 
of left (resp.~right) $A$-supermodules $M,N$ is a (not necessarily homogeneous) $\k$-linear map 
such that
\[\varphi(av) =(-1)^{\overline{\varphi}\, \overline{a}} a \varphi(v), 
\quad
\text{resp. }\  \varphi(v a) = \varphi(v)a,\]
for all $a \in A, v \in V$.

Considering $A$ as a right supermodule over itself, 
notice that the map 
\begin{equation}\label{eq-iso1}
A\, \xrightarrow{\sim}\, \mathrm{End}_A(A) :\, a\mapsto m_A(a\otimes-)
\end{equation}
is an isomorphism of superalgebras. 

Let ${}_A \MM$ (resp.~$\MM_A$) 
denote the category of all left (right) 
$A$-supermodules and homomorphisms, 
with composition of morphisms defined by the rule of signs, 
as in \eqref{eq-rule}. 
Given a pair $M,N$ of left (right) $A$-supermodules, 
let $\mathrm{Hom}_A(M,N)$ denote 
the set of homomorphisms, $\varphi: M\to N$.
We may also consider the superalgebra 
 $\mathrm{End}_A(M):=\mathrm{Hom}_A(M,M)$.

We write $A\-\mathbf{mod}$ (resp.~$\mathbf{mod}\-A$) 
to denote the full subcategory of
$A$-supermodules which are finitely generated as ordinary $A$-modules. 
The underlying $\k$-module $|A|\in \M_\k$ is an ordinary $\k$-algebra. 
In this case, we respectively write
\[{}_{|A|}\M, \quad  \M_{|A|}, \quad  |A|\-\mathrm{mod}, \quad \mathrm{mod}\-|A|,\]
to denote the corresponding categories of ordinary left $|A|$-modules, etc.

\subsection{Free $A$-supermodules} 
Let $A$ be a superalgebra.
Given any right $A$-supermodule $M\in \MM_A$,
define the {\em parity change} $\Pi M$ to be the same $A$-module, but with opposite 
$\mathbb{Z}/2$-grading.
For example, given $m,n\in \N$ and
considering $A$ as a right $A$-supermodule,
we write $A^{n|m} := A^n\oplus (\Pi A)^m$.
We will also write 
\[\mat_{n}(A) := \End_A(A^{n}),
\quad\qquad \mat_{n|m}(A):= \End_A(A^{n|m})\]
to denote corresponding superalgebras.

We define the (right) parity change functor $\Pi: \mod\-A  \to \mod\-A$ which sends $M \to \Pi M$.
On a right A-supermodule homomorphism $\phi : M \to N$, we set $\Pi(\phi) = (-1)^{\bar{\phi}}\phi$ as a linear map.

Suppose the free right $A$-supermodule $A^{n|m}$ (resp.~$A^{n'|m'}$) 
has homogeneous $A$-linear basis
$v_1, \dots, v_{m+n}$ (resp.~$w_1, \dots, w_{m'+n'}$),
such that 
$v_1, \dots, v_n$ (resp.~$w_1, \dots, w_{n'}$) are even
and 
$v_{n+1}, \dots, v_{m+n}$ (resp.~$w_{n'+1}, \dots, w_{m'+n'}$) are odd.
Then for a given $a\in A$, we write 
$e(j,i)$ (resp.~$e^{(a)}(j,i)$)
to denote the homomorphism
 in $\Hom_A(A^{n|m},A^{n'|m'})$ 
which sends $v_{k}\mapsto 0$ for all 
$k\neq i$ and sends
$v_i\mapsto w_j$ 
(resp.~$v_i\mapsto w_j a$).

Notice that the parity of the element $e(j,i)$ is given by:
\[\overline{e(j,i)} = 
\begin{cases}
\bar{0},& \text{if }\, 
1\leq i\leq n\, \text{ and }\, 1\leq j \leq n',\\
&\text{or if }\,
n< i\leq m+n\, \text{ and }\, n'< j \leq m'+n';\\
\bar{1}, &\text{else.}
\end{cases}\]
Similarly, given a homogeneous element $a\in A$, we have 
\begin{equation}\label{eq-parity}
\overline{e^{(a)}(j,i)} = 
\bar{a} + \overline{e(j,i)}.
\end{equation}

Now suppose $A\in \VV_\k$.  
If $A$ is free as a $\k$-supermodule, 
then we may identify
 $\Hom(\k^{n|m},\k^{n'|m'})$ as a
 $\k$-subsupermodule
of $\Hom_A(A^{n|m},A^{n'|m'})$
which is generated by the elements $e(j,i)$.  
The isomorphism \eqref{eq-iso1} may then be generalized as follows.

\begin{lemma}\label{lem:free}
Let $A$ be a superalgebra 
which is finitely-generated and free as a $\k$-module. 
Considering $A$ as a right $A$-supermodule, 
there exists for any $m,n,m',n'\in \N$
an even isomorphism of $\k$-supermodules
\[A\otimes \Hom(\k^{n|m},\k^{n'|m'}) 
\simeq
\Hom_A(A^{n|m},A^{n'|m'}): \quad
a\otimes e(j,i) \mapsto e^{(a)}(j,i).\]
In particular, we have the following superalgebra isomorphisms:
\\[-2.5mm]
\begin{itemize}
\item[(i)]
$A\otimes \End(\k^{n|m}) \simeq\mat_{n|m}(A)$ 
\\
\item[(ii)]
$A\otimes \End(\k^{n}) \simeq \mat_{n}(A)$
\end{itemize}
\end{lemma}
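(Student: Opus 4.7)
The plan is to define the map $\Theta$ sending $a\otimes e(j,i)$ to $e^{(a)}(j,i)$, verify it is an even isomorphism of $\k$-supermodules by constructing an explicit inverse, and then check multiplicativity in the endomorphism cases.

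Since $A$ is finitely generated and free as a $\k$-module, and $\{e(j,i)\}$ is a homogeneous $\k$-basis of the $\k$-supermodule $\Hom(\k^{n|m},\k^{n'|m'})$, the source decomposes as $\bigoplus_{j,i} A\otimes\k\cdot e(j,i)$, so $\Theta$ is unambiguously defined on basis tensors and extended $\k$-linearly. Evenness follows immediately from \eqref{eq-parity}, since both $a\otimes e(j,i)$ and $e^{(a)}(j,i)$ carry the common parity $\bar a+\overline{e(j,i)}$.

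For the inverse I would use that $w_1,\ldots,w_{n'+m'}$ is a free right $A$-basis of $A^{n'|m'}$: each $\phi\in\Hom_A(A^{n|m},A^{n'|m'})$ admits a unique expansion $\phi(v_i)=\sum_j w_j a_{ji}$ with $a_{ji}\in A$, and I set $\Theta^{-1}(\phi):=\sum_{j,i} a_{ji}\otimes e(j,i)$. The identity $e^{(a)}(j,i)(v_p)=\delta_{ip}\,w_j a$, combined with the uniqueness of the expansion, renders the verification that $\Theta$ and $\Theta^{-1}$ are mutually inverse entirely routine.

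For the superalgebra isomorphisms in (i) and (ii), multiplicativity is checked by a direct computation. Iterated application of the right $A$-linearity rule $\varphi(vc)=\varphi(v)c$ gives
\[e^{(a)}(j,i)\circ e^{(b)}(l,k)=\delta_{il}\,e^{(ab)}(j,k),\]
which must be matched against the product
\[(a\otimes e(j,i))(b\otimes e(l,k)) = (-1)^{\overline{e(j,i)}\bar b}\delta_{il}(ab)\otimes e(j,k)\]
in $A\otimes\End(\k^{n|m})$, computed via the rule of signs \eqref{eq-rule}. Case (ii) is immediate, since all the relevant parities are even and no signs intervene. The main obstacle is the sign bookkeeping in case (i), where the factor $(-1)^{\overline{e(j,i)}\bar b}$ arising from the super tensor product must be reconciled with the convention under which composition in $\MM_A$ is defined by the rule of signs, as set up in the preceding subsection.
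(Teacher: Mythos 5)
Your construction of the even $\k$-linear isomorphism is correct and essentially parallel to the paper's, with one small difference of route: the paper verifies bijectivity by exhibiting the homogeneous basis $\{e^{(a_r)}(j,i)\}$ of $\Hom_A(A^{n|m},A^{n'|m'})$ and matching it against the basis $\{a_r\otimes e(j,i)\}$ of the source, whereas you build an explicit inverse from the unique expansion $\phi(v_i)=\sum_j w_j a_{ji}$. Both arguments use freeness of $A$ over $\k$ in the same essential way, and your evenness check via \eqref{eq-parity} matches the paper's.

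The genuine gap is in your verification of (i), and you have in fact written down the obstruction yourself without resolving it. With composition taken as literal composition of maps, your formula $e^{(a)}(j,i)\circ e^{(b)}(l,k)=\delta_{il}\,e^{(ab)}(j,k)$ carries no sign, while the product in $A\otimes\End(\k^{n|m})$ carries the factor $(-1)^{\overline{e(j,i)}\,\overline{b}}$; these genuinely disagree, e.g.\ already for $A=C(1)$, $a=b=c$ odd and $e(j,i)$ an odd matrix unit of $\End(\k^{1|1})$, where one side is the negative of the other. So ``reconciling the conventions'' is not residual bookkeeping to be deferred: as your two displayed formulas stand, they disprove multiplicativity, and the proof of (i) is not complete until the compensating sign is located. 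It must enter either through the definition of composition in $\MM_A$ (the paper's ``rule of signs'' convention) or through the identification of $\End_A(A^{n|m})$ with the super matrix algebra $\mat_{n|m}(A)$, whose multiplication of matrix units is itself signed. A clean way to close the gap without chasing signs by hand is to identify $A^{n|m}\simeq A\otimes \k^{n|m}$ as right $A$-supermodules, realize $a\otimes e(j,i)$ as the operator $m_A(a\otimes-)\boxtimes e(j,i)$ via \eqref{eq-iso1} and \eqref{eq-isom}, and then invoke the naturality of \eqref{eq-isom} with respect to composition, which reproduces exactly the sign in \eqref{eq-rule}; multiplicativity in case (i) then holds by construction, and your observation that case (ii) is sign-free remains valid since $\End(\k^n)$ is purely even.
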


\begin{proof}
Since $A$ is a free $\k$-supermodule, we have 
$A\simeq \k^{p|q}$, for some $p,q \in \N_0$. 
Let $a_1, \dots, a_p$ 
(resp.~$a_{p+1}, \dots, a_{p+q}$)
be a $\k$-basis of $A_{\0}$ (resp.~$A_{\1}$). 
Then the set
\begin{equation*}\label{eq-basis}
\{e^{(r)}(j,i) := e^{(a_r)}(j,i)
\mid
1\leq i\leq m+n,\,
1\leq j\leq m'+n' ,\,
1\leq r \leq p+q\}
\end{equation*}
is a homogeneous $\k$-basis of $\Hom_A(A^{n|m},A^{n'|m'})$. 
It follows by 
\eqref{eq-parity} 
 that $\overline{e^{(r)}(j,i)}
= \overline{a}_r + \overline{e(j,i)}$, for any $r$.
The mapping 
\[a\otimes e(j,i) \mapsto e^{(a)}(j,i)\] 
thus yields an even isomorphism.
For the case $m=m'$ and $n=n'$,  
it is easy to check that 
the isomorphism 
\[A\otimes \End(\k^{n|m}) \simeq 
\End_A(A^{n|m})\]
is a superalgebra homomorphism. This shows part (i) of the lemma.  
Part (ii) follows by setting $m=0$.
\end{proof}

\subsection{Evenly generated supermodules}\label{ss-eg}
Notice that $\MM_A$ and $_A \MM$ are not abelian categories.  It will thus be convenient to consider the abelian subcategory 
 $_A\MM^{\ev}
 \subset {}_A \MM$ 
(resp.~
 $\MM_A^{\ev} 
 \subset \MM_A$), 
consisting of the same objects but only even morphisms.
This allows us to make use of the basic notions of homological algebra by restricting our attention to only even morphisms. 
For example, by a short exact sequence in $_A \MM$
 (resp.~$\MM_A$), we mean a sequence
\[0 \to L \to M \to N \to 0\]
with all the maps being even. 

Now suppose that $A$ is a superalgebra and 
that $M$ belongs to $A\-\mod$ (resp.\ $\mod\-A$). 
Then  $M$ is automatically finitely generated 
as an $A$-module. 
It follows that 
there is a short exact sequence of the form 
\[A^{n|m} \to M \to 0.\]
We will say that $M$ is {\em evenly generated} 
if there exists an even surjective homomorphism 
of the form
\[A^n \twoheadrightarrow M.\]

\subsection{Projective supermodules}
We say that a right $A$-supermodule $M$ 
is {\em projective}
if $M$ is a projective object of 
$\MM_A^\ev$. 
Let us write $\VV_A$ to denote the full subcategory 
of $\mod\-A$
consisting of all finitely generated, projective right $A$-supermodules.
Let us also write 
\[\EE_A \subset \VV_A\]
to denote the full subcategory consisting of the evenly-generated, projective right 
$A$-supermodules.  

\begin{remark}\label{rem-clifford}
The categories $\EE_A$ and $\VV_A$ are not distinct in general.
For example, let  $C(1)$ denote the rank 1 Clifford algebra 
(see \cite[Example 2.6]{A1}).
Then one may check that every $C(1)$-supermodule is evenly generated, 
and thus $\EE_{C(1)} = \VV_{C(1)}$.
\end{remark}

We further write $\V_{|A|}$ to denote the category of
ordinary (non-graded)
 finitely generated projective  $|A|$-modules.
Note for example that $\VV_\k$ (resp.~$\V_\k$) 
denotes the category 
of all finitely generated, projective $\k$-supermodules
(resp.~$\k$-modules).
In this case, we may identify $\EE_\k$ with $\V_\k$, considered as 
a subcategory of $\MM_\k$.

\subsection{Enriched categories}\label{ss-enriched}
We recall the definition of a specific type
of $\M$-enriched category,
where we assume that $\M \subset \MM_\k^{\ev}$.
See \cite{Kelly} for more details about 
$\M$-enriched categories in general.

First notice that 
$\MM_\k = (\MM_\k, \otimes, \k)$ 
is a symmetric monoidal category
with symmetry isomorphism 
given by the so-called {\em supertwist} map
\begin{equation*}
\tau: M\otimes N \xrightarrow{\sim} N\otimes M.
\end{equation*}
This is the  $\k$-linear map sending 
\begin{equation}\label{eq-twist}
v\otimes w \mapsto (-1)^{\overline{v}\, \overline{w}} w\otimes v \quad (v\in M, w\in N)
\end{equation}
 for any $M,N\in \MM_\k$.
It follows by \eqref{eq-twist} that the supertwist $\tau$ is even. 
It is thus natural to view 
 each of the subcategories 
\[\M_\k, \ \MM_\k^{\ev}\,
\subset\ \MM_\k\]
as a monoidal subcategory by restriction.

In the remainder, we assume 
 that $\M$ denotes a full subcategory of 
$\MM_\k^{\ev}$ such that: 
\begin{equation}\label{monoid}
\k \in \M\ \text{ and }\ 
M\otimes N \in \M,\,  \text{ whenever } M,N \in \M.
\end{equation}
It follows that $\M$ has the structure of a symmetric monoidal category
given by restriction.

We now recall the corresponding definition of enriched categories.

\begin{definition}[\cite{Kelly}]
An {\em $\M$-enriched category} (or {\em $\M$-category}) 
is a category $\C$ such that 
the hom-set $\Hom_\C(X,Y)$  
is an object of $\M$ for all $X,Y\in \C$, 
the composition is $\k$-bilinear and the induced map 
\[\circ^{\C}_{X,Y,Z}: \,
\Hom_{\C}(Y,Z) \otimes \Hom_{\C}(X,Y)
\ \rightarrow\ 
\Hom_{\C}(X,Z)\]
is an even $\k$-linear map (that is, a morphism in $\M$)
for all $X,Y,Z \in \C$. 
\end{definition}

If $\M \subset \M'$ are  both full subcategories of $\MM_{\k}^\ev$ 
satisfying \eqref{monoid}, 
then it is clear that any $\M$-category is automatically an $\M'$-category.
Notice for example that an $\M_{\k}$-enriched category is just a $\k$-linear category.

Let us fix a category $\M \subset \MM_\k^\ev$ as above.
We next recall the definition of enriched functors and natural transformations. 

\begin{definition}
Suppose $\C,\D$ are $\M$-enriched categories.  
An {\em $\M$-enriched} functor $F:\C\to \D$  
is a functor such that the map 
\[F_{X,Y}: \Hom_\C(X,Y) \to \Hom_{\D}(F(X),F(Y))\] 
is an even $\k$-linear map for any
pair of objects $X,Y \in \C$.  
\end{definition}

\begin{definition}
Next suppose that $F,G: \C \to \D$ is a pair of $\M$-enriched functors.  
We first define 
an {\em even} (resp.~{\em odd}) 
{\em $\M$-enriched natural transformation},
$\eta: F \to G$, 
to be a collection of even (odd) linear maps 
\[\eta(X) \in \Hom_{\M}(F(X),G(X)) \qquad (^\forall X\in \C)\]
such that for a given $\varphi \in \Hom_\M(X,Y)$ we have 
\[G(\varphi) \circ \eta(X) = 
(-1)^{\overline{\varphi}\, \overline{\eta(X)} } \eta(Y) \circ F(\varphi).\]
In general, an {\em $\M$-enriched natural transformation}, 
$\eta:F\to G$, is then defined 
to be a collection of linear maps, $\eta(X)= \eta_{\0}(X) \oplus \eta_{\1}(X)$ 
$(X\in \C),$ such that 
\[ \eta_\epsilon(X) \in \Hom_{\M}(F(X),G(X))_\epsilon \qquad (\epsilon \in \Z/2)\]
and $\eta_{\0}$ (resp.~$\eta_{\1}$): $F\to G$ is an even (resp.~odd)  $\M$-enriched
natural transformation.
\end{definition}

We also recall the corresponding category of enriched functors and natural transformations.

\begin{definition}
Given a pair $\C,\D$ of $\M$-enriched categories,
we write 
$\mathrm{Fun}_\k(\C,\D)$ to denote
the $\M$-enriched category consisting 
of all even $\k$-linear functors 
\[X: \C \to \D\]
and whose morphisms are $\M$-enriched natural transformations.
\end{definition}

Let $\C$ be an $\M$-enriched category.
We write $X \cong Y$,
if $X,Y$ are isomorphic in $\C$.  
If there is an even isomorphism $\varphi:X \cong Y$ (i.e., 
$\varphi\in \Hom_\C(X,Y)_{\bar{0}}$), then we use the notation 
$X \simeq Y$.

Let $\C^\mathrm{ev}$ 
denote the subcategory of $\C$ consisting of the same 
objects but only even morphisms.
Since $\M$-enriched 
functors send even morphisms to even morphisms, they
give rise to the corresponding functors between the underlying even 
subcategories.

Notice that $\V_\k$ and $\VV_\k^\ev$ are both full subcategories 
of $\MM_\k^\ev$ which satisfy \eqref{monoid}.
In the remainder, we will mostly consider enriched categories
where $\M=\V_\k$ or 
$\VV_\k^\ev$.

Given a superalgebra $A$, we may  view
$A\-\mod$ and $\mod\-A$ 
as $\VV_\k^{\ev}$-enriched categories,
while ${}_A\MM$ and $\MM_A$ 
are $\MM_\k^{\ev}$-enriched categories.
Furthermore, the categories 
${}_A\MM^\ev$ and $\MM_A^\ev$
are abelian, while the respective subcategories
$(A\- \mod)^\ev$, $(\mod\-A)^\ev$,
${}_A \VV^\ev$, and $\VV_A^\ev$
are exact categories in the sense of Quillen (see \cite{Buhler, Keller}).

\subsection{Tensor Products of $\M$-categories}
Suppose $\C, \D$ are $\M$-enriched categories. 
Then define $\C\otimes \D$ to be the
$\M$-enriched category whose objects are pairs
$(X,X')$ with $X\in \C$, $X'\in \D$
and with morphisms  defined by:
\[\Hom_{\C\otimes\D}((X,X'),(Y,Y'))
:=\Hom_{\C}(X,Y) \otimes\Hom_{\D}(X',Y')\]
for all $X,Y \in \C$ and $X',Y'\in \D$.
The composition of morphisms is defined similarly to \eqref{eq-rule}.

\section{Categories of Divided Powers}\label{sec-divided}
After recalling some basic properties of divided powers, 
we define the categories $\Gamma^d\C$ associated to 
any $\VV_\k^\ev$-category $\C$.

\subsection{Divided powers}
Suppose $M\in \VV_\k$ and $d\in \Z_{\geq 1}$. 
There is a unique (even) right action of the symmetric 
group $\Si_d$ on the tensor power $M^{\otimes d}$ such that each 
transposition $(i\ i+1)$ for $1\le i \le d-1$ acts by the supertwist: 
\[ (v_1 \otimes \cdots \otimes v_d). (i\ i+1) \ := 
\ (-1)^{\overline{v}_i \overline{v}_{i+1} } v_1\otimes \cdots \otimes v_{i+1} \otimes v_i \otimes \cdots \otimes v_d,\]
for any $v_1, \dots, v_d \in M$, with $v_i, v_{i+1}$ being $\Z/2$-homogeneous.  

We define the {\em $d$-th divided power} of $M$ to be 
the invariant subsupermodule 
\[\Gamma^d M:= (M^{\otimes d})^{\Si_d}\]
and set $\Gamma^0 M :=\k$.

\begin{remark}
We have used an equivalent definition to the  
one given in \cite{A1,A2}.  
See \cite[Example 2.1.1]{Krause}
for an analogous statement regarding 
divided powers of ordinary $\k$-modules.
\end{remark}

For any $M, M',N, N'\in \VV_\k$, there is an even isomorphism 
\begin{equation}\label{eq-isom}
 \Hom(M, M')\otimes \Hom(N, N')
 \simeq \Hom(M\otimes N, M'\otimes N')
\end{equation}
sending 
$\phi\otimes \psi \mapsto \phi\boxtimes \psi$, 
where 
\begin{equation*}
(\phi\boxtimes \psi)( v\otimes w):= 
(-1)^{\overline{\psi}\, \overline{v}}
 \phi(v) \otimes \psi(w).
\end{equation*}
Notice that the isomorphism \eqref{eq-isom} is natural with respect to composition.

Next, recall that the functor $\otimes^d: \VV_\k \to \VV_\k$ sending   
$M \mapsto M^{\otimes d}$,  whose action on morphisms is defined by 
\[ \otimes^d_{M,N}(\varphi) :=\,  
 \varphi \boxtimes \cdots \boxtimes \varphi: 
M^{\otimes d} \to N^{\otimes d} \] 
for any $\varphi\in \Hom(M,N)$.

Given $d,e\in \N$, there is an embedding 
\begin{equation}\label{eq-exp-1}
\Gamma^d M \otimes \Gamma^e N\, 
\hookrightarrow \, 
\Gamma^{d+e} (M\oplus N): \, 
x\otimes y \mapsto x\cdot y,
\end{equation}
where 
\[x\cdot y\, := \sum_{\sigma \in \Si_{d+e}/\Si_{d,e}} 
(x \otimes y). \sigma\, \in\, M^{\otimes (d+e)}\]
for $x\in M^{\otimes d}$, $y\in M^{\otimes e}$,
where $\Si_{d,e}$ denotes the image of $\Si_d \times \Si_e \hookrightarrow \Si_{d+e}$.
Then as in 
\cite[Eq.(26)]{A1}, 
there is a corresponding decomposition 
\begin{equation}\label{eq-exp-2}
\Gamma^d(M\oplus N) \cong 
\bigoplus_{0\leq c\leq d} \Gamma^cM\otimes \Gamma^{d-c}N 
\end{equation}
where we identify 
$\Gamma^cM\otimes \Gamma^{d-c}N$ 
as a subsupermodule of $\Gamma^d(M\oplus N)$ under the embedding \eqref{eq-exp-1}.

It follows easily from \eqref{eq-exp-2} 
that the divided power $\Gamma^d M$
of a projective $\k$-supermodule $M\in \VV_\k$ 
is again finitely-generated and projective. 
This yields a functor 
$\Gamma^d: \VV_\k \to \VV_\k$ 
which is a subfunctor of $\otimes^d$.  
In particular, the action of $\Gamma^d$ on morphisms 
is defined by restriction  
\begin{equation*}
\Gamma^d_{M,N}(\varphi) 
:= (\varphi^{\otimes d})|_{\Gamma^d M}:
\Gamma^dM \to \Gamma^d N
\end{equation*}
for any $\varphi\in \Hom(M,N)$. 
It is then clear that the (non-linear) maps $\Gamma^d_{M,N}$ 
are even for all $M,N\in \VV_\k$, 
so that $\Gamma^d$ restricts to a functor 
$\Gamma^d:\VV_\k^\ev \to \VV_\k^\ev$.

Suppose that 
$I = (d_1, \dots, d_s)$ is a tuple of positive 
integers, and let $\mathfrak{S}_I$ denote the subgroup 
$\mathfrak{S}_{d_1}\times \cdots \times \mathfrak{S}_{d_s} \subseteq \mathfrak{S}_{|I|}$, 
where $|I| := \sum d_i$. Given $M \in \VV_{\k}$ and 
distinct nonzero elements $v_1, \dots, v_s \in M_{\0}$, 
we define the new element
\[(v_1, \dots, v_s; I)_0 := 
\sum_{\sigma \in \mathfrak{S}_{|I|}/\mathfrak{S}_{I}} (v_1^{\otimes d_1} \otimes \cdots \otimes v_s^{\otimes d_s}).\sigma,\]
which belongs to 
$(M_{\0}^{\otimes |I|})^{\mathfrak{S}_{|I|}}$.  Similiarly, 
if $v_1', \dots, v_t' \in M_{\1}$, we define the (possibly 
zero) element
\[(v_1', \dots, v_t')_1:=
\sum_{\sigma \in \mathfrak{S}_{t}} (v_1' \otimes \cdots \otimes v_t').\sigma,\]
which belongs to $(M_{\1}^{\otimes t})^{\mathfrak{S}_t}$.

\begin{lemma}\label{lem:divided}
Suppose that $M\in \VV_\k$ is a free $\k$-supermodule, 
and that 
$\{v_1, \dots, v_\mu\}$ 
$($resp.~$\{v_{\mu+1}, \dots, v_{\mu+\nu}\})$ 
is an ordered basis of $M_{\0}$ $($resp.~$M_{\1})$.
Then a basis for $\Gamma^d M$ 
is given by the set of all elements of the form
\[(v_{i_1}, \dots, v_{i_s}; I)_0 \cdot  
(v_{j_1}, \dots, v_{j_t})_1,\]
such that 
$i_1, \dots, i_s$
$($resp.~$j_1,\dots, j_t)$ 
are distinct elements of the set
$\{1,\dots, \mu\}$
$($resp.~$\{\mu+1,\dots, \mu+\nu\})$
and $|I| +t = d$.
\end{lemma}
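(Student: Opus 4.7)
The plan is to derive the basis by splitting $M = M_{\0} \oplus M_{\1}$ and applying the decomposition formula \eqref{eq-exp-2}, so that it only remains to exhibit bases of the even and odd divided powers $\Gamma^c M_{\0}$ and $\Gamma^{d-c} M_{\1}$ separately and combine them through the product embedding \eqref{eq-exp-1}.

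First I would invoke \eqref{eq-exp-2} with the roles of $M$ and $N$ played by $M_{\0}$ and $M_{\1}$, respectively, to obtain
\[\Gamma^d M \,\cong\, \bigoplus_{c=0}^{d} \Gamma^c M_{\0} \otimes \Gamma^{d-c} M_{\1},\]
where each summand is identified with a $\k$-subsupermodule of $\Gamma^d M$ via $x \otimes y \mapsto x \cdot y$ from \eqref{eq-exp-1}. It thus suffices to exhibit bases of the two factors and check that the products recover the claimed elements $(v_{i_1},\dots,v_{i_s};I)_0 \cdot (v_{j_1},\dots,v_{j_t})_1$ with $|I| = c$ and $t = d-c$.

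For the even factor, the supertwist restricts on $M_{\0}^{\otimes c}$ to the ordinary $\Si_c$-action, so $\Gamma^c M_{\0}$ is the classical $c$-th divided power of the free $\k$-module $M_{\0}$. The orbit sums $(v_{i_1},\dots,v_{i_s};I)_0$, indexed by compositions $I = (d_1,\dots,d_s)$ of $c$ together with strictly increasing tuples $i_1 < \cdots < i_s$ in $\{1,\dots,\mu\}$, then form a basis by the standard description of divided powers of free modules (cf.\ \cite[Example 2.1.1]{Krause}). For the odd factor $\Gamma^{d-c} M_{\1}$, the key observation is that the supertwist restricts on $M_{\1}^{\otimes(d-c)}$ to the sign action: each transposition acquires an extra factor $(-1)^{\bar{1}\,\bar{1}} = -1$ on top of swapping tensor coordinates. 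Hence $\Gamma^{d-c} M_{\1}$ is precisely the space of antisymmetric tensors in $M_{\1}^{\otimes(d-c)}$, and the sum $(v_{j_1},\dots,v_{j_t})_1$ is the standard antisymmetrizer applied to $v_{j_1}\otimes\cdots\otimes v_{j_t}$. It follows that this element vanishes if two indices agree, changes sign under a transposition of indices, and, when restricted to strictly increasing $j_1 < \cdots < j_t$ in $\{\mu+1,\dots,\mu+\nu\}$, yields a $\k$-basis of $\Gamma^{d-c} M_{\1}$ (identified with the exterior power $\wedge^{d-c} M_{\1}$).

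Finally, combining these two bases through the embedding of \eqref{eq-exp-1} and summing over $c$ via \eqref{eq-exp-2} produces the asserted basis of $\Gamma^d M$. The only subtle point is the odd part, where one must recognize the supertwist-invariants as antisymmetric tensors and verify that the orbit-sum formula indeed reproduces the antisymmetrizer basis; once that identification is made, the even summand and the final assembly are routine.
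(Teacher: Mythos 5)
Your proposal is correct and follows essentially the same route as the paper: the paper's proof likewise reduces to the decomposition $\Gamma^d M \simeq \bigoplus_{k+l=d} \Gamma^k M_{\0} \otimes \Gamma^l M_{\1}$ (its equation \eqref{eq-Gamma}, which is exactly \eqref{eq-exp-2} applied to $M = M_{\0}\oplus M_{\1}$), identifies the even factor with the classical divided power $D^k\abs{M_{\0}}$ whose basis of orbit sums comes from Bourbaki, and identifies the odd factor with the exterior power $\Lambda^l\abs{M_{\1}}$ with the antisymmetrized basis. Your write-up merely makes explicit the sign computation showing that the supertwist restricts to the trivial action on $M_{\0}^{\otimes k}$ and to the sign action on $M_{\1}^{\otimes l}$, which the paper leaves as ``not difficult to verify.''
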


\begin{proof}
For any $\k$-module $V\in \V_\k$, let 
$D^d V$ and $\Lambda^d V$ denote the ordinary $d$-th divided 
and exterior powers of $V$, respectively. 
Then it follows, as in \cite[(8)]{A1}, 
that we have the following isomorphisms of $\k$-modules 
\begin{equation}\label{eq-Gamma}
\Gamma^d M\ \simeq\ \bigoplus_{k + l = d} 
D^k\abs{M_{\0}} \otimes \Lambda^l \abs{M_{\1}}
\ \simeq \bigoplus_{k+l =d} \Gamma^k M_{\0} \otimes \Gamma^l M_{\1}
\end{equation}
for each $d \ge 0$.  

One may check by comparison with Proposition 4 of
\cite[Ch.IV, \S5]{Bourbaki} that the set 
\[ \{ (v_{i_1}, \dots, v_{i_s}; I)_0 ;\ |I| = k \mbox{ and } 1\le i_1< \cdots< i_s \le \mu\}\]  
is a basis of $\Gamma^k M_{\0}$.  It is also 
not difficult to verify that
\[ \{ (v_{j_1}, \dots, v_{j_l})_1;\ 1 \le j_1< \cdots< j_l \le \nu\}\]  
is a basis of $\Gamma^l M_{\1}$.  The 
lemma then follows from (\ref{eq-Gamma}).
\end{proof}

\subsection{The category $\Gamma^d\C$}
Suppose $M, N\in \VV_\k$.  
Then define 
$\psi^d=\psi^d(M,N)$ to be 
the unique map which makes the  
following square commute: 
\begin{equation}\label{commute}
\begin{tikzcd}[row sep=large]
\Gamma^d M \otimes \Gamma^d N  
 \ar[d, tail ] \ar[r, "\psi^d"] 
& \Gamma^d(M\otimes N)
\ar[d, tail ]\\
M^{\otimes d} \otimes N^{\otimes d} 
\ar[r, "\sim"]
& (M \otimes N)^{\otimes d} 
\end{tikzcd}
\end{equation}

We are now ready to define divided powers of any given $\VV_\k^\ev$-enriched category. 

\begin{definition}
Suppose that $\C$ is a $\VV_\k^\ev$-enriched category. 
Then we define a new $\VV_\k^\ev$-category, $\Gamma^d\C$, which has the
same objects as $\C$, with morphisms given by
\[\Hom_{\Gamma^d\C}(X,Y):= \Gamma^d\Hom_{\C}(X,Y)\]
for all $X,Y \in \C$. 
The composition of morphisms 
\[\circ^{\Gamma^d\C}_{X,Y,Z}: \
\Hom_{\Gamma^d\C}(Y,Z) \otimes \Hom_{\Gamma^d(\C)}(X,Y)\, \rightarrow\, \Hom_{\Gamma^d\C}(X,Z)\]
is defined by the following composite
\[
\Gamma^d\Hom_{\C}(Y,Z) \otimes \Gamma^d\Hom_{\C}(X,Y) 
\ \xrightarrow{\,\psi^d\,}\  
\Gamma^d\big(\Hom_{\C}(Y,Z) \otimes \Hom_{\C}(X,Y) \big)
\]
\[\hspace{2.5cm} \xrightarrow{\,\Gamma^d(\circ^\C_{X,Y,Z})\,} \ 
\Gamma^d\Hom_{\V}(X,Z)\]
for all $X,Y,Z\in \C$.
\end{definition}

\subsection{The algebra $\Gamma^d A$}
Suppose that $A\in \VV_\k$ is a superalgebra.
Then we may identify  $A$ as a
$\VV_\k^\ev$-enriched category 
with one object, $o$, and hom-set  
$\Hom_A(o,o):= A$.
Hence, the divided power $\Gamma^d A$ of 
a superalgebra $A$ is also a superalgebra. 
The multiplication $m_{\Gamma^dA}$ is induced by the composition 
\[ \Gamma^d A\otimes \Gamma^d A
\xrightarrow{\psi^d} 
\Gamma^d(A\otimes A) 
\xrightarrow{\Gamma^d(m_A)}
\Gamma^d A. \] 
In particular, $\Gamma^d A$ is a subsuperalgebra 
of $A^{\otimes d}$.

\subsection{Generalized Schur algebras}
\label{ss-gen-schur}
Fix $m,n,d\in \N$, and let $A\in \VV_\k$ be a superalgebra.
Then we may identify $\mat_n(A)$ with the 
superalgebra of $n\times n$ matrices 
over $A$. 
We may also identify $\mat_n(A)$ with 
the tensor product $A\otimes \mat_n(\k)$ of superalgebras. 
The generalized Schur algebra 
$S^A(n,d)$ defined in \cite{EK1} may then be identified as the superalgebra 
\[S^A(n,d)
=\Gamma^d \mat_n(A).\]
The classical Schur algebra, $S(n,d)$, is  obtained as
the special case:
$S^\k(n,d) = \Gamma^d \mat_n(\k)$.
As mentioned in \cite[Example 2.1.1]{Krause},
this definition is equivalent to  
the original definition given by Green in \cite{Green}.

We next introduce another type of generalized Schur superalgebra
which will be needed later. 
Given nonnegative integers $m,n$, let 
\[S^A(m|n,d) := 
\Gamma^d \mat_{m|n}(A).\]
Notice that $S^\k(m|n,d)$ is isomorphic to 
the Schur superalgebra 
$S(m|n,d)$ introduced by Muir in \cite{Muir}.

\section{Generalized Polynomial Functors}\label{sec-gen-poly}
We now define a generalization of both the ordinary categories of strict polynomial functors, defined by Friedlander and Suslin in \cite{FS}, and  
the super-analogues of these categories defined in \cite{A1}.

\subsection{Representations of a category}\label{sec-rep}
Suppose that $\C$ is a $\VV_\k^\ev$-enriched category.
Then write 
\[\rrep\, \C := \mathrm{Fun}_\k(\C,\VV_\k).\]
to denote the category of all even $\k$-linear functors, 
$F:\C \to \VV_\k$.

Next suppose that $X\in\C$, and consider the superalgebra $E:= \End_\C(X)$. 
The categories $\rrep\, \C$ and $E\- \mod$ are both $\VV_\k^\ev$-enriched 
categories. Recall that 
$(E\-\mod)^\ev$ and $\VV_\k^\ev$ 
are both exact categories.
Now since direct sums, products, 
kernels and cokernels can be computed objectwise in
the target category $\VV_\k$, 
we see that $(\rrep\, \C)^\ev$ is also an exact category.

The relationship between $\rrep\, \C$ and $E\-\mod$ is given by evaluation 
on $X$.
If $F \in \rrep\, \C$, the (even) functoriality of $F$ makes the $\k$-supermodule 
$F(X)$ into a left $E$-supermodule.
We thus have the evaluation functor:
\[\rrep\, \C \to\  E\-\mod:\, F \mapsto F(X).\]

This evaluation functor also has the following interpretation.  
Since the covariant hom-functor 
$h^X:= \Hom_\C(X,-)$ is an even $\k$-linear functor, it must belong to 
$\rrep\, \C$.
In this situation, Yoneda's lemma takes the form of an even isomorphism
\begin{equation}\label{eq-Yoneda}
\Hom_{\rrep\, \C}(h^X, F)\ \simeq\ F(X)
\end{equation}
for any $F \in \rrep\,\C$.  In particular, 
\[ E\ =\ h^X(X)\ \simeq\ \End_{\rrep\,\C}(h^X).\]
Hence, Yoneda's lemma allows us to interpret ``evaluation at $X$" 
as the functor $\Hom_{\rrep\, \C}(h^X, -): \rrep\, \C \to{} E\-\mod$.

Notice that the parity change functor, $\Pi: \VV_\k \rightarrow \VV_\k$, induces
by composition a functor $\Pi\circ- : \rrep\, \C \to \rrep\, \C$.

We now describe a condition on $X$ which ensures that evaluation 
is in fact an equivalence of categories.  
The following lemma is a generalization of \cite[Proposition A.1]{Touze}.
It was stated in \cite[Proposition A.1]{A1} assuming that $\k$ is a field.  However, 
the generalization to the case of a commutative ring $\k$ is easily obtained. 

\begin{lemma}[{\cite{Touze,A1}}]
\label{lem:eval}
Let $\C$ be a $\VV_\k^\ev$-enriched category. Assume that there exists an object $P\in\C$ 
such that for all $X,Y\in\C$, the composition induces a surjective map
\[ \Hom_\C(P,Y)\otimes \Hom_\C(X,P)\twoheadrightarrow \Hom_\C(X,Y).\]
Then the following hold.
\begin{enumerate}
\item[(i)] For all $F\in  \rrep\, \C$ and all $Y\in\C$, the canonical map 
$\Hom_\C(P,Y)\otimes F(P)\to F(Y)$ is surjective.
\item[(ii)]  The set  $\{h^P, \Pi h^P\}$ is a projective generator of $(\rrep\, \C)^\ev$, 
where  $h^P = \Hom_\C(P, -)$ as above.
\item[(iii)] If $E = \End_\C(P)$, then evaluation on $P$ induces an 
equivalence of categories, $\rrep\, \C \simeq{}  E \- \mod$.
\end{enumerate}
\end{lemma}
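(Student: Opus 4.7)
The plan is to follow the standard Yoneda-plus-projective-generator blueprint for a representable Morita-type equivalence, being careful with the $\Z/2$-grading.

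\textbf{For (i).} Apply the surjectivity hypothesis with $X = Y$ to write $\mathrm{id}_Y = \sum_i \phi_i \circ \psi_i$ with $\phi_i \in \Hom_\C(P,Y)$ and $\psi_i \in \Hom_\C(Y,P)$. Since $\mathrm{id}_Y$ is even, functoriality of $F$ then gives, for any $w \in F(Y)$,
\[w = F(\mathrm{id}_Y)(w) = \sum_i F(\phi_i)\bigl(F(\psi_i)(w)\bigr),\]
exhibiting $w$ as the image of $\sum_i \phi_i \otimes F(\psi_i)(w) \in \Hom_\C(P,Y) \otimes F(P)$ under the canonical evaluation map.

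\textbf{For (ii).} The Yoneda isomorphism \eqref{eq-Yoneda} yields natural even $\k$-supermodule isomorphisms $\Hom_{\rrep\,\C}(h^P, F) \simeq F(P)$, and hence
\[\Hom_{(\rrep\,\C)^\ev}(h^P, F) \simeq F(P)_{\bar{0}}, \qquad \Hom_{(\rrep\,\C)^\ev}(\Pi h^P, F) \simeq F(P)_{\bar{1}}.\]
Both right-hand functors are exact since exactness in $(\rrep\,\C)^\ev$ is computed pointwise in $\VV_\k^\ev$, so $h^P$ and $\Pi h^P$ are projective. For the generator property, note that by Yoneda each homogeneous $v \in F(P)$ corresponds to an even natural transformation from $h^P$ (when $v$ is even) or $\Pi h^P$ (when $v$ is odd) into $F$; assembling these across all objects $Y\in\C$, part (i) upgrades the resulting map from a coproduct of copies of $h^P$ and $\Pi h^P$ to $F$ to an epimorphism in $(\rrep\,\C)^\ev$.

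\textbf{For (iii).} Yoneda provides a superalgebra isomorphism $E \simeq \End_{\rrep\,\C}(h^P)$, so $\ev_P \simeq \Hom_{\rrep\,\C}(h^P, -)$ is a well-defined functor $(\rrep\,\C)^\ev \to (E\-\mod)^\ev$. Exactness is pointwise; faithfulness follows from naturality combined with (i) (a natural transformation vanishing at $P$ must vanish on every $Y$); fullness reduces to the Yoneda calculation at $h^P$ together with the generator property in (ii). For essential surjectivity, construct a quasi-inverse $V \mapsto F_V$ by $F_V(Y) := \Hom_\C(P,Y) \otimes_E V$, where $\Hom_\C(P,Y)$ carries the right $E$-supermodule structure given by precomposition. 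Direct computation gives $\ev_P(F_V) \simeq \Hom_\C(P,P) \otimes_E V \simeq V$, while (i) applied pointwise, together with a presentation of $V$ as an $E$-supermodule and right-exactness of $-\otimes_E V$, shows the counit $F_{\ev_P(F)} \to F$ is an isomorphism.

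\textbf{Main obstacle.} No single step is conceptually difficult; the arguments are essentially those of Touz\'e's non-super, field-coefficient case, and the passage from a field to an arbitrary commutative ring $\k$ is automatic since projectivity and exactness are used only in a pointwise manner. The real care must go into sign-tracking: one must verify that the $E$-supermodule structures and the tensor product $\otimes_E$ (defined via the rule of signs \eqref{eq-rule}) interact correctly with the parity change functor $\Pi$ and with the even/odd splitting of Hom spaces, and then confirm that the equivalence obtained on the even subcategories $(\rrep\,\C)^\ev \simeq (E\-\mod)^\ev$ lifts to the enriched equivalence $\rrep\,\C \simeq E\-\mod$ by accounting for odd natural transformations via $\Pi$.
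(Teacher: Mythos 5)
The paper offers no proof of this lemma at all: it is stated with citations to \cite{Touze} and \cite{A1} and the remark that the passage from a field to a commutative ring ``is easily obtained.'' Your write-up supplies the standard argument those references use --- splitting $\mathrm{id}_Y$ through $P$ for (i), Yoneda plus pointwise exactness for (ii), and the Morita-type quasi-inverse $V\mapsto \Hom_\C(P,-)\otimes_E V$ for (iii) --- so you are on the intended route, and parts (i) and (ii) are complete and correct (for the generator claim in (ii) you should say explicitly that $F(P)\in\VV_\k$ is finitely generated, so finitely many homogeneous Yoneda maps suffice and no infinite coproducts are needed).

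The one place where your sketch of (iii) has a genuine gap --- precisely the place where ``easily obtained'' conceals something over a non-field $\k$ --- is essential surjectivity. Your candidate preimage $F_V(Y)=\Hom_\C(P,Y)\otimes_E V$ must be an object of $\VV_\k$, i.e.\ finitely generated and \emph{projective} over $\k$, in order for $F_V$ to lie in $\rrep\,\C$ at all; since $F_V(P)\simeq V$, this forces $V$ itself to be $\k$-projective, which is not automatic for an arbitrary object of $E\-\mod$ as the paper defines it (already for the one-object category with $\Hom=\k$ and $E=\k$, evaluation lands only in $\k$-projectives). What the surjectivity hypothesis does give you, via the splitting $\mathrm{id}_Y=\sum_{i=1}^k\phi_i\circ\psi_i$, is that the maps $\phi\mapsto(\psi_i\circ\phi)_i$ and $(e_i)\mapsto\sum_i\phi_i\circ e_i$ exhibit $\Hom_\C(P,Y)$ as a direct summand of $E^{\oplus k}$ as a right $E$-supermodule; consequently $F_V(Y)$ is a $\k$-direct summand of $V^{\oplus k}$ and hence lies in $\VV_\k$ whenever $V$ does, and the same splitting gives the injectivity of the counit $\Hom_\C(P,Y)\otimes_E F(P)\to F(Y)$ that your well-definedness/fullness step silently needs. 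You should add this observation and either restrict the target of the equivalence to the $\k$-projective objects of $E\-\mod$ or record the hypothesis under which the two coincide; as written, your proof establishes fully faithfulness and exactness but not essential surjectivity onto all of $E\-\mod$.
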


We note that for a given a $\V_\k$-category $\C$, we may also consider the category 
\[\rep\, \C := \mathrm{Fun}_\k(\C,\V_\k)\]
consisting of ordinary 
 $\k$-linear functors and natural transformations.

\subsection{Generalized polynomial functors}
Given a $\VV_\k^\ev$-category $\C$
and $d\in\N_0$,
we associate
the corresponding category 
\[\PP_{d, \C} := \rrep\, \Gamma^d\C\]
which we call the category of
({\em homogeneous, degree $d$})
{\em generalized polynomial functors}
associated to the category $\C$.

Suppose now that $\C$ is a $\V_\k$-enriched category.
Then we may also consider the category
\[\P_{d,\C}:=\rep\, \Gamma^d\C\]
whose objects we again call generalized polynomial functors.

 \begin{example}\label{ex-pol}
Suppose that $\k$ is a field.
Then we have the following special cases:
\begin{itemize}
\item[(a)]
$\P_{d, \V_\k}$
is equivalent to the category 
$\P_{d,\k}$
of homogenous strict polynomial functors,
defined by Friedlander and Suslin
in \cite{FS}.
\smallskip
\item[(b)]
$\PP_{d, \VV_\k}$
is equivalent to the category
$\Pol^{\I}_d$
defined in \cite[Section 3]{A1}.
\smallskip
\item[(c)]
Recall from Remark \ref{rem-clifford}
 that $C(1)$ denotes the rank 1 Clifford algebra.
Then
$\PP_{d,\EE_{C(1)}}=\PP_{d,\VV_{C(1)}}$
is equivalent to the category 
$\Pol^{\II}_d$
of {\em spin polynomial functors} defined in \cite[Section 3]{A1}.
\end{itemize}
\end{example}

We are now ready to state our main result.

\begin{theorem}\label{thm-main-1}
Suppose $A\in \VV_\k$ is a superalgebra
which is free as a $\k$-module. 
Then for $m,n,d\in \N$, with $m, n\geq d$, 
 we have the following  equivalences of categories:
\smallskip
\begin{enumerate}
\item[(i)]
$\displaystyle
\PP_{d, \VV_{A}}\ \equi\ S^A(m|n,d)\-\mathbf{mod};$
\\
\item[(ii)]
$\displaystyle
\PP_{d, \EE_{A}}\ \equi\ 
S^A(n,d) \- \mathbf{mod}$,
\end{enumerate}
\smallskip
given by evaluation on $A^{m|n}$ and $A^n$, respectively.
\end{theorem}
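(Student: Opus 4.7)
The plan is to apply Lemma \ref{lem:eval} to the $\VV_\k^\ev$-enriched category $\C = \Gamma^d \VV_A$ with distinguished object $P = A^{m|n}$ for part (i), and to $\C = \Gamma^d \EE_A$ with $P = A^n$ for part (ii). Granted the hypothesis of that lemma, part (iii) yields the desired equivalences via evaluation at $P$, once the endomorphism superalgebras are identified correctly.

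The identification of endomorphism algebras is a direct consequence of the definition of $\Gamma^d \C$ together with Lemma \ref{lem:free}: by construction, $\End_{\Gamma^d \VV_A}(A^{m|n}) = \Gamma^d \End_{\VV_A}(A^{m|n})$, and Lemma \ref{lem:free}(i) gives $\End_{\VV_A}(A^{m|n}) \cong \mat_{m|n}(A)$, so the endomorphism superalgebra is $\Gamma^d \mat_{m|n}(A) = S^A(m|n, d)$. Analogously, Lemma \ref{lem:free}(ii) gives $\End_{\Gamma^d \EE_A}(A^n) \cong \Gamma^d \mat_n(A) = S^A(n, d)$.

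The remaining hypothesis to verify is the surjectivity of the composition $\Gamma^d \Hom_A(P, Y) \otimes \Gamma^d \Hom_A(X, P) \twoheadrightarrow \Gamma^d \Hom_A(X, Y)$ for all $X, Y$ in the relevant category. First I would reduce to the case $X = P^{\oplus a}$, $Y = P^{\oplus b}$: every object of $\VV_A$ is a direct summand of $(A^{m|n})^{\oplus k}$ for some $k$, and every object of $\EE_A$ is an even direct summand of $(A^n)^{\oplus k}$ for some $k$ (here I use that an even surjection $A^N \twoheadrightarrow M$ with $M$ projective in $\VV_A^\ev$ splits evenly). Since $\Gamma^d$ preserves direct summands, composition surjectivity for general $X, Y$ then follows from the corresponding surjectivity for $P^{\oplus a}, P^{\oplus b}$. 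In this reduced case, combining the matrix description of morphisms from Lemma \ref{lem:free} with the basis of divided powers from Lemma \ref{lem:divided} shows that $\Gamma^d \Hom_A(X, Y)$ is spanned by symmetrized products involving at most $d$ elementary morphisms $e^{(a_\ell)}(j_\ell, i_\ell)$. Since $m, n \geq d$ (resp.\ $n \geq d$), one can choose $d$ distinct intermediate basis vectors of $P$ of matching parity and factor each elementary morphism as $X \xrightarrow{e(k_\ell, i_\ell)} P \xrightarrow{e^{(a_\ell)}(j_\ell, k_\ell)} Y$, thereby exhibiting the basis element as the image of a corresponding element of $\Gamma^d \Hom_A(P, Y) \otimes \Gamma^d \Hom_A(X, P)$ under divided composition.

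The main obstacle is this last combinatorial surjectivity argument, which is precisely where the bound $m, n \geq d$ (resp.\ $n \geq d$) becomes necessary: one needs enough distinct intermediate slots of $P$ to simultaneously factor all $d$ tensor slots of an arbitrary symmetrized morphism, and the parity-matching constraint forces the even and odd counts of $P$ to be considered separately in the $\VV_A$ case. This parallels the classical Friedlander-Suslin argument (for $A = \k$ and the ordinary category $\P_{d,\k}$) and its super-analogue in \cite{A1}, with Lemma \ref{lem:free} providing the new input needed to carry out the combinatorial bookkeeping over the superalgebra $A$.
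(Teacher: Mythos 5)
Your proposal is correct and follows essentially the same route as the paper's proof: apply Lemma \ref{lem:eval} with $P=A^{m|n}$ (resp.\ $A^n$), identify $\End_{\Gamma^d\VV_A}(A^{m|n})=\Gamma^d\mat_{m|n}(A)=S^A(m|n,d)$ via Lemma \ref{lem:free}, reduce the surjectivity of composition to free modules by splitting off summands, and then factor each elementary morphism $e^{(r)}(k,i)$ through $P$ using $d$ distinct, parity-matched intermediate basis slots, with Lemma \ref{lem:divided} supplying the spanning set of $\Gamma^d\Hom_A(X,Y)$. The only cosmetic difference is that you reduce to $X,Y\cong P^{\oplus k}$ rather than to arbitrary free supermodules $A^{p|q}$, which changes nothing of substance.
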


\begin{proof}
We prove part (i). Since
$S^A(m|n,d)= \End_{\Gamma^d\VV_A}( A^{m|n})$, 
it suffices by Lemma \ref{lem:eval}
to show that the composition map
 \begin{equation}\label{eq-comp}
 \Hom_{\Gamma^d\VV_A}(A^{m|n}, Y) \otimes \Hom_{\Gamma^d\VV_A}(X, A^{m|n}) 
 \rightarrow \Hom_{\Gamma^d\VV_A}(X,Y)
 \end{equation}
is surjective, for all $X,Y\in \VV_A$. 
 
By naturality, one may reduce to the case where $X$ is
a free (right) $A$-supermodule.  
Since $Y$ is projective and thus a direct summand 
of a free  $A$-supermodule, 
we may also assume that $Y$ is free.
We may thus assume that 
 $X= A^{p|q}$, $Y=A^{s|t}$, for some $p,q,s,t\in \N_0$.

Let us identify 
$\Hom(\k^{p|q}, \k^{m|n})$ as a subset of 
$\Hom(X, A^{m|n})$
via the embedding of 
Lemma \ref{lem:free}.(i).
It will then suffice to show the following map
\begin{equation}\label{eq-comp2}
\Gamma^d\Hom_A(A^{m|n}, Y) \otimes \Gamma^d \Hom_\k(\k^{p|q}, \k^{m|n})
\rightarrow \Gamma^d \Hom_A(X,Y),
\end{equation}
obtained by restricting composition, is surjective.

Suppose that 
$a_1, \dots, a_{\mu}$ 
(resp.~$a_{\mu+1}, \dots, a_{\mu+\nu}$)
is a $\k$-basis of $A_{\0}$ (resp.~$A_{\1}$). 
Then, using the notation in the proof of 
Lemma \ref{lem:free},
it follows that
there exist (homogeneous) 
bases 
$(e(j,i))$, $(e^{(r)}(k,j))$ and 
$(e^{(r)}(k,i))$ 
of $\Hom(\k^{p|q}, \k^{m|n})$, 
$\Hom_{A}(A^{m|n}, Y)$ 
and $\Hom_{A}(X, Y)$, respectively, such that:
\[  e^{(r)}(k,j_1) \circ e(j_2,i)\, =\ 
e^{(r)}(k,i)\cdot\delta_{j_1,j_2},\]
where $\delta_{j_1,j_2}$ is the Kronecker delta.

Suppose that 
$r_1, \dots, r_a, r_1', \dots, r_b' 
\in \{1, \dots, \mu+\nu\}$, 
 for some $1\le a, b \le d$,
 and assume the basis elements
\[e^{(r_1)}(k_1,i_1), \dots, e^{(r_a)}(k_a,i_a)\quad
(\text{resp.~}e^{(r_1')}(k_1',i_1'), \dots, e^{(r_b')}(k_b',i_b'))\]
are even (odd).
 Then to prove surjectivity, it suffices to show 
 that each elemen of the form 
\begin{equation}\label{eq-element}
 (e^{(r_1)}(k_1,i_1), \dots, e^{(r_a)}(k_a,i_a); I)_0\cdot 
 (e^{(r_1')}(k_1',i_1'), \dots, e^{(r_b')}(k_b',i_b'))_1
\end{equation}
belongs to the image of (\ref{eq-comp2}), since $\Gamma^d \Hom_{A}(X,Y)$ is 
spanned by such elements according to Lemma \ref{lem:divided}.

Now suppose  $I=(d_1, \dots, d_a) \in (\Z_{>0})^a$.
Then, since $m, n\ge d= |I| + b \ge a+b$, we may choose 
{\em distinct} indices $j_1, \dots, j_a, j_1', \dots, j_b'$ 
such that 
$e(j_1,i_1), \dots, e(j_a,i_a),\ e(j_1',i_1'), \dots, e(j_b',i_b')$
are all even elements.
We are thus able to form the element
\begin{align*}
 &(e^{(r_1)}(k_1,j_1), \dots, e^{(r_a)}(k_a,j_a); I)_0 \cdot
(e^{(r_1')}(k_1',j_1'), \dots, e^{(r_b')}(k_b',j_b'))_1\\[1pt]
 &\quad \otimes (e(j_1,i_1), \dots, e(j_s,i_s),\ e(j_1',i_1'), \dots, e(j_t',i_t'); J)_0,
\end{align*}
where $J = (d_1, \dots, d_a, 1,\dots, 1) \in (\Z_{>0})^{a+b}$, which is sent to 
the element (\ref{eq-element}) under the map induced by composition 
in $\Gamma^d\VV_A$.

The proof of equivalence (ii) is similar. 
\end{proof}

We also have the following non-graded analogue of Theorem \ref{thm-main-1}.

\begin{theorem}\label{thm-main-2}
Let $A\in \V_\k$ be an ordinary $\k$-algebra which is free as a $\k$-module,
and suppose $n,d \in \mathbb{N}$.
Then there is an equivalence of categories:
\[\P_{d, \V_{A}}
\ \equi\ 
S^A(n,d) \- \mathrm{mod},\]
provided $n \ge d$.
\end{theorem}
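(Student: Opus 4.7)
The plan is to mirror the proof of Theorem \ref{thm-main-1}(ii) in the ungraded setting, where everything simplifies because no parity appears. First I would observe that
\[\End_{\Gamma^d \V_A}(A^n)\ =\ \Gamma^d \End_A(A^n)\ =\ \Gamma^d \mat_n(A)\ =\ S^A(n,d)\]
as ordinary $\k$-algebras, so that applying the $\V_\k$-enriched analogue of Lemma \ref{lem:eval} to $\C = \Gamma^d \V_A$ with distinguished object $P = A^n$ would deliver the desired equivalence via evaluation at $A^n$. The $\V_\k$-enriched analogue of Lemma \ref{lem:eval} is obtained by exactly the same argument as its proof in \cite{Touze}, with the single representable $h^P$ playing the role of the projective generator in place of the pair $\{h^P, \Pi h^P\}$, since no parity change functor is present on $\V_\k$.

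The remaining task is then to verify the hypothesis of Lemma \ref{lem:eval}: the surjectivity of the composition map
\[\Hom_{\Gamma^d \V_A}(A^n, Y) \otimes \Hom_{\Gamma^d \V_A}(X, A^n)\ \to\ \Hom_{\Gamma^d \V_A}(X, Y)\]
for all $X, Y \in \V_A$. By naturality and the fact that every object of $\V_A$ is a direct summand of a free $A$-module, one reduces to the case $X = A^p$, $Y = A^s$ for some $p,s \in \N_0$. Using the case $m = 0$ of Lemma \ref{lem:free} to identify $\Hom(\k^p, \k^n) \hookrightarrow \Hom_A(A^p, A^n)$, the task further reduces to surjectivity of the restricted composition
\[\Gamma^d \Hom_A(A^n, A^s) \otimes \Gamma^d \Hom(\k^p, \k^n)\ \to\ \Gamma^d \Hom_A(A^p, A^s).\]

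To handle this last step, I would specialize Lemma \ref{lem:divided} to the purely even setting ($M_{\1} = 0$) to obtain a basis of the target consisting of elements $(e^{(r_1)}(k_1, i_1), \ldots, e^{(r_a)}(k_a, i_a); I)_0$ with $|I| = d$ and $a \le d$. Because $n \geq d \geq a$, one can choose distinct indices $j_1, \ldots, j_a \in \{1, \ldots, n\}$ and exhibit the preimage
\[(e^{(r_1)}(k_1, j_1), \ldots, e^{(r_a)}(k_a, j_a); I)_0\ \otimes\ (e(j_1, i_1), \ldots, e(j_a, i_a); I)_0,\]
whose image under composition in $\Gamma^d \V_A$ is precisely the desired basis element, via the identity $e^{(r)}(k, j_1) \circ e(j_2, i) = \delta_{j_1, j_2}\, e^{(r)}(k, i)$ already used in the proof of Theorem \ref{thm-main-1}.

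The argument is structurally identical to, but strictly simpler than, the proof of Theorem \ref{thm-main-1}(ii): the odd-element factors $(\ldots)_1$ never appear and all sign conventions trivialize. Accordingly there is no substantive obstacle here — the main \emph{hard part} is merely the administrative one of checking that Lemmas \ref{lem:eval}, \ref{lem:free}, and \ref{lem:divided} each transfer cleanly to the ungraded $\V_\k$-enriched setting, which they do by direct specialization to purely even modules.
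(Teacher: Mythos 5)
Your proposal is correct and follows essentially the same route as the paper: the paper's proof of Theorem \ref{thm-main-2} simply states that the argument is analogous to that of Theorem \ref{thm-main-1}, with \cite[Prop.~A.1]{Touze} (which is exactly the $\V_\k$-enriched statement you describe) used in place of Lemma \ref{lem:eval}. You have merely written out in full the ungraded specialization that the paper leaves implicit, and the details you supply match the paper's Theorem \ref{thm-main-1} argument step for step.
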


\begin{proof}
The proof is analagous to Theorem \ref{thm-main-1}, using
\cite[Prop.~A.1]{Touze}
in place of Lemma \ref{lem:eval}.
\end{proof}

It follows by Section \ref{ss-gen-schur} and
 Example \ref{ex-pol}
that Theorem \ref{thm-main-1}
generalizes Theorem 4.2 of \cite{A1}, while
Theorem \ref{thm-main-2} generalizes Theorem 3.2 of \cite{FS}.

\section{Generalized Schur-Weyl Duality}\label{sec-gen-sw} 
Throughout this section, we fix a superalgebra $A\in \VV_\k$.

\subsection{Wreath products}
Consider the group algebra of the symmetric group,
 $\k \Si_d$,
 as a superalgebra concentrated in degree zero.
Then the {\em wreath product superalgebra}, $A\wr \Si_d$, 
is the $\k$-supermodule $A^{\otimes d} \otimes \k \Si_d$, 
with multiplication defined by 
\begin{equation}\label{wr}
(x \otimes \rho ) \cdot (y \otimes \sigma)
:= x (y\rho^{-1}) \otimes \rho \sigma
\end{equation}
for all $x,y \in A^{\otimes d}$ and $\rho, \sigma \in \Si_d$. 
If $G$ is a finite group, then note for example that 
$(\k G) \wr \Si_d$ is isomorphic to 
the group algebra of the classical wreath product, 
$G\wr \Si_d := G^{d} \rtimes \Si_d$. 

Assume in the remainder that $A$ is free  
as a $\k$-module. 
We then identify the tensor power $A^{\otimes d}$ 
and group algebra  $\k\Si_d$  
as subalgebras of $A\wr \Si_d$ by setting 
\[A^{\otimes d}=A^{\otimes d} \otimes 1_{\Si_d}, \quad 
\k\Si_d=1_{A^{\otimes d}} \otimes \k \Si_d\] 
respectively.

\subsection{Tensor products}
Given nonnegative integers $d$ and $e$, we have an embedding 
$\Si_d \times \Si_e \hookrightarrow \Si_{d+e}$.  This induces an embedding 
\begin{equation}\label{eq-embed}
\Gamma^{d+e}M \hookrightarrow \Gamma^{d}M\otimes \Gamma^{e}M
\end{equation}
 for any $M\in \VV_\k$, given by the composition of the following maps:
\begin{align*}
 \Gamma^{d+e}M =
(M^{\otimes d+e})^{\Si_{d+e}}\ &\hookrightarrow\,
 (M^{\otimes d+e})^{\Si_d\times \Si_e}\\
 &\, \simeq\ \,
  (M^{\otimes d})^{\Si_d}\otimes (M^{\otimes e})^{\Si_e} = 
  \Gamma^dM \otimes \Gamma^eM.
\end{align*}
We then have a corresponding diagonal embedding 
\begin{equation}\label{eq-embed-2}
\delta: \Gamma^{d+e}\,\C \hookrightarrow \Gamma^d\C \otimes \Gamma^e\C, \qquad  
\end{equation}
which sends $X \mapsto (X,X)$
and which acts on morphisms via the 
embedding induced by (\ref{eq-embed}).

Given $F\in \PP_{d,\C}$, $G\in \PP_{e,\C}$, let
$F\boxtimes G \in \rrep(\Gamma^d\C\otimes\Gamma^e\C)$
denote the functor which sends 
\[(X_i,Y_i) \mapsto F(X_i)\otimes G(Y_i)
\ \mbox{ and } \ 
\varphi_1\otimes \varphi_2 \mapsto F(\varphi_1)\boxtimes G(\varphi_2),\]
for all $X_i,Y_i \in \C$ and 
$\varphi_i\in\Hom_\C(X_i,Y_i)$, for $i=1,2$, respectively.
We then define the tensor product
\begin{equation}\label{eq-tensor}
-\otimes-: \PP_{d, \C} \times \PP_{e, \C} \rightarrow \PP_{d+e,\, \C}
\end{equation}
as the composition of functors
\[
\PP_{d,\C} \times \PP_{e,\C}
\xrightarrow{\ -\boxtimes -\ }
\rrep(
\Gamma^d\C
\otimes
\Gamma^e\C)
\xrightarrow{\  \delta_* \ }
\PP_{d+e,\C}.
\]
where $\delta_*$ is the functor induced by \eqref{eq-embed-2}.

\subsection{Generalized Schur-Weyl duality}
Let us write 
$\PP^A_d:= \PP_{d,\EE_A}$.
Then 
the (external) tensor product defined above
induces a bifunctor
\[ -\otimes- : 
\PP^A_d \times \PP^A_e
\rightarrow
\PP^A_{d+e}.\]
This gives
$\PP^A:= \bigoplus_{d\ge 0} \PP^A_d$
the structure of a monoidal category.
\smallskip

Given any object $X\in \EE_A$,
we associate the corresponding object
\[\Gamma^{d,X} := \Hom_{\Gamma^d\EE_A}(X,-)\]
in the category $\PP^A_d$.
It follows by Yoneda's lemma \eqref{eq-Yoneda}
that $\Gamma^{d,X}$ is a projective object.

Let us write
$\Gamma_A^{d,n}:= \Gamma^{d,A^n}$.
Then it follows from 
(\ref{eq-exp-1}) that we have a decomposition 
\begin{equation}\label{eq-decomp.}
\Gamma_A^{d,m+n} \simeq \bigoplus_{i+j=d} \Gamma_A^{i, m} \otimes \Gamma_A^{j, n}
\end{equation}
of strict polynomial functors.

Now let $\Lambda(n,d)$ denote the set of all tuples 
$\lambda=(\lambda_1, \dots, \lambda_n)\in (\Z_{\geq 0})^n$ 
such that $\sum \lambda_i = d$.  Given $\lambda \in \Lambda(n,d)$, 
we will write 
$\Gamma^\lambda_A :=
\Gamma^{\lambda_1,1}_A \otimes \cdots \otimes \Gamma^{\lambda_n,1}_A$.
By (\ref{eq-decomp.}) and induction, we have a canonical isomorphism
\begin{equation}\label{eq-lambda}
\Gamma_A^{d,n} \simeq \bigoplus_{\lambda\in \Lambda(n,d)} \Gamma_A^\lambda.
\end{equation}
It follows that the objects $\Gamma^\lambda$ are projective in 
$\PP^A_d$.

Let $\omega=(1,\dots, 1) \in \Lambda(d,d)$.  
Notice that 
$\Gamma^\omega(X) \simeq X^{\otimes d}$
for any $X\in \EE_A$,
since $\Hom_A(A,X)\simeq X$.
It follows that
 $\otimes^d:=\Gamma^\omega$
is  a projective object of $\PP^A_d$.
We then have the following 
analogue of the generalized Schur Weyl duality 
described in \cite{EK1}.

\begin{theorem}\label{thm-gen-sw}
Assume $n \ge d$, and write $V=A^n$.
\begin{itemize}
\item[(i)] The left $S^A(n,d)$-supermodule 
$V^{\otimes d} \simeq \Hom_{\PP^A_d}
(\Gamma^{d,n}, \otimes^d)$ 
is a projective object of $S^A(n,d) \-\mod$.
\smallskip
\item[(ii)]
There is a canonical isomorphism of superalgebras: 
\[\End_{\PP^A_d}(\otimes^d) \cong A\wr\Si_d.\]

\item[(iii)] We have an exact functor 
\[\Hom_{\PP^A_d}(\otimes^d, -): \PP^A_d 
\rightarrow A\wr\Si_d \,\- \mod.\]
\end{itemize}
\end{theorem}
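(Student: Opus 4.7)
The three parts build on one another, so my plan is to treat (i) and (iii) lightly around part (ii), where the real content lies.

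For part (i), the isomorphism $V^{\otimes d}\simeq\Hom_{\PP^A_d}(\Gamma^{d,n},\otimes^d)$ is a direct application of Yoneda's lemma \eqref{eq-Yoneda} at $X=A^n$ and $F=\otimes^d$, using $\otimes^d(A^n)=(A^n)^{\otimes d}=V^{\otimes d}$; both sides inherit the same left $S^A(n,d)=\End_{\Gamma^d\EE_A}(A^n)$-action by precomposition. Projectivity of $V^{\otimes d}$ over $S^A(n,d)$ is then inherited from the projectivity of $\otimes^d=\Gamma^\omega$ in $\PP^A_d$, noted just before the theorem, via the Morita equivalence of Theorem \ref{thm-main-1}(ii), which is given precisely by evaluation at $A^n$. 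Part (iii) is a formal consequence of (ii) and (i): since $\otimes^d$ is a projective object of $(\PP^A_d)^\ev$, the $\Hom$-functor $\Hom_{\PP^A_d}(\otimes^d,-)$ is exact, and by (ii) its values carry canonical left $A\wr\Si_d$-supermodule structures.

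The bulk of the work is part (ii). My plan is to construct a superalgebra homomorphism
\[\Phi: A\wr\Si_d \to \End_{\PP^A_d}(\otimes^d),\]
and then to deduce bijectivity by transport through the equivalence of Theorem \ref{thm-main-1}(ii). I would define $\Phi$ on the two subalgebras $\k\Si_d$ and $A^{\otimes d}$ separately. For $\sigma\in\Si_d$, $\Phi(\sigma)$ is the natural transformation whose component at $X\in\EE_A$ is the signed permutation of tensor factors on $X^{\otimes d}$ induced by the supertwist; naturality holds because morphisms in $\Gamma^d\EE_A$ are by definition $\Si_d$-invariants inside $\Hom_A(X,Y)^{\otimes d}$. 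For $a_1\otimes\cdots\otimes a_d\in A^{\otimes d}$, the component of $\Phi(a_1\otimes\cdots\otimes a_d)$ at $X$ sends $v_1\otimes\cdots\otimes v_d$ to $\pm v_1 a_1\otimes\cdots\otimes v_d a_d$, with signs dictated by the rule of signs; naturality in $X$ uses that right multiplication on each tensor factor commutes (up to signs) with right $A$-linear maps on that factor. A direct check against \eqref{wr} shows that these two prescriptions assemble into a superalgebra homomorphism. To conclude, I identify $\End_{\PP^A_d}(\otimes^d)\cong\End_{S^A(n,d)}(V^{\otimes d})$ via Theorem \ref{thm-main-1}(ii), under which $\Phi$ matches the $A\wr\Si_d$-action on $V^{\otimes d}$ appearing in the generalized Schur--Weyl duality of Evseev and Kleshchev \cite{EK1}; they prove that this action yields an isomorphism $A\wr\Si_d\simeq\End_{S^A(n,d)}(V^{\otimes d})$ whenever $n\geq d$.

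The main obstacle is the verification that $\Phi$ is well defined: both component prescriptions must be shown to be natural with respect to divided-power morphisms rather than arbitrary tensor-power morphisms, and the multiplicative relation \eqref{wr} must be checked with the correct signs determined by the rule of signs. Once this bookkeeping is complete, bijectivity of $\Phi$ is inherited from \cite{EK1} via the equivalence of Theorem \ref{thm-main-1}(ii), and parts (i) and (iii) follow with essentially no further work.
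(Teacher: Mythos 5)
Your proposal is correct, and parts (i) and (iii) match the paper's argument (Yoneda plus projectivity of $\otimes^d=\Gamma^\omega$, transported through the evaluation equivalence of Theorem \ref{thm-main-1}). For part (ii), however, you take a genuinely different route. The paper stays entirely on the Schur-algebra side: it invokes the even idempotent $\xi_\omega\in S^A(n,d)$ of \cite[\S 5]{EK1} with $V^{\otimes d}\simeq S^A(n,d)\xi_\omega$, uses the formal isomorphism $\End_{S^A(n,d)}(S^A(n,d)\xi_\omega)\cong \xi_\omega S^A(n,d)\xi_\omega$, and then cites \cite[Lemma 5.15, Proposition 5.17]{EK1} to identify the latter with $A\wr\Si_d$; no natural transformation is ever written down. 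You instead build the map $\Phi:A\wr\Si_d\to\End_{\PP^A_d}(\otimes^d)$ explicitly on the functor side, via signed permutations and componentwise right multiplication, and only then transport to $\End_{S^A(n,d)}(V^{\otimes d})$ to quote the Evseev--Kleshchev isomorphism. Both arguments ultimately lean on \cite{EK1} for bijectivity, so neither is more self-contained, but yours buys an explicit formula for the isomorphism (useful if one wants to compute the $A\wr\Si_d$-action on values of the Schur--Weyl functor in (iii)), at the price of the sign bookkeeping and the naturality verification you flag. That verification does go through: elements of $\Gamma^d\Hom_A(X,Y)$ are $\Si_d$-invariant tensors, so they commute with the signed permutation action, and right multiplication by $a_i$ on each factor supercommutes with right $A$-linear maps; the paper's idempotent argument sidesteps this check entirely, which is what makes it shorter.
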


\begin{proof}
Part (i) follows from Theorem \ref{thm-main-1}.(i) and the fact that
 $\bigotimes^d$ is a projective object of $\PP^A_d$.

Recall from \cite[\S 5]{EK1} that there is an even idempotent
$\xi_\omega \in S^A(n,d)$
such that $V^{\otimes d} \simeq S^A(n,d) \xi_\omega$,
as left supermodules.
Since $\xi_\omega$ is an idempotent, it follows that the superalgebras
$\xi_\omega S^A(n,d) \xi_\omega$ and 
$\End_{S^A(n,d)}(S^A(n,d) \xi_\omega)$ are isomorphic.
Part (ii) thus follows from Theorem \ref{thm-main-1}.(i), 
together with 
\cite[Lemma 5.15]{EK1} and \cite[Proposition 5.17]{EK1}.

Finally, (iii) is a direct consequence of (i) and (ii).
\end{proof}

\begin{remark}
One may refer to the functor in Theorem \ref{thm-gen-sw}.(iii) 
as the {\em generalized Schur-Weyl duality functor}.  A similar functor 
related to classical Schur-Weyl duality was studied in 
\cite{HY} in the context of $\mathfrak{g}$-categorification.
\end{remark}


\begin{thebibliography}{99}

\bibitem{A1} J.~Axtell, {\em Spin polynomial functors and representations of Schur superalgebras}. Represent. Theory {\bf 17}, 584--609 (2013).

\bibitem{A2} \bysame, {\em On Schur Superfunctors}, Algebr. Represent. Theor. {\bf 21} (2018), no. 1, 87--129.

\bibitem{A3} \bysame, {\em Cellularity of generalized Schur algebras via Cauchy decomposition}, 
J. Algebra {\bf 572} (2021), 422--460.

\bibitem{Bourbaki} N.~Bourbaki, Algebra II. Chapters 4--7.  Translated from the 1981 French edition by P. M. Cohn and J. Howie. Reprint of the 1990 English edition. Elements of Mathematics (Berlin). Springer-Verlag, Berlin, 2003.

\bibitem{Buhler}
T.~B\"uhler, Exact categories, Expositiones Mathematicae 28 (2010) 1--69. 

\bibitem{EK1}  A.~Evseev and A.~Kleshchev, {\em Turner doubles and generalized Schur algebras},  Adv.~Math. {\bf 317} (2017), 665--717.

\bibitem{EK2} \bysame, {\em Blocks of symmetric groups, semicuspidal KLR algebras and zigzag Schur-Weyl duality},  Ann.~of Math. (2) {\bf 188} (2018), no. 2, 453--512.

\bibitem{FS} E.M. Friedlander and A. Suslin, {\em Cohomology of finite group schemes over a field}, Invent.~Math. {\bf 127}  (1997), no. 2, 209--270.

\bibitem{Green} J.A.~Green, Polynomial representations of ${\rm GL}_{n}$. Second corrected and augmented edition. With an appendix on Schensted correspondence and Littelmann paths by K. Erdmann, Green and M. Schocker. Lecture Notes in Mathematics, {830}. Springer, Berlin, 2007.

\bibitem{Keller} B. Keller, Derived categories and their uses, in: Handbook of algebra, vol. 1, North-Holland, Amsterdam, 1996, pp. 671--701.

\bibitem{Kelly} G. M.~Kelly, Basic concepts of enriched category theory.  London Mathematical Society Lecture Note Series, 64. Cambridge University Press, Cambridge-New York, 1982. 

\bibitem{Muir} N.J.~Muir, Polynomial representations of the general linear Lie superalgebra, Ph.D.~Thesis, University of London, 1991.

\bibitem{Krause} H.~Krause, 
{\em Highest weight categories and strict polynomial functors. With an appendix by Cosima Aquilino}, 
EMS Ser.~Congr.~Rep.,  Representation theory, current trends and perspectives, 331--373, Eur.~Math.~Soc., Z\"urich, 2017.


\bibitem{Touze} A.~Touz\'e, Ringel duality and derivatives of non-additive functors,  J. Pure Appl. Algebra {\bf 217} (2013), no. 9, 1642--1673.


\bibitem{HY} J. Hong, O. Yacobi. Polynomial functors and categorifications of Fock space II,  Adv. Math. {\bf 237} (2013), 360--403.

\end{thebibliography}
\end{document}